\title[Policy Evaluation in Distributional LQR]{Policy Evaluation in Distributional LQR}
\author{%
 \Name{Zifan Wang}$^1$ \Email{zifanw@kth.se} \\
 \Name{Yulong Gao}$^2$ \Email{yulong.gao@cs.ox.ac.uk} \\
 \Name{Siyi Wang}$^3$ \Email{siyi.wang@tum.de} \\
 \Name{Michael M. Zavlanos}$^4$ \Email{michael.zavlanos@duke.edu} \\
  \Name{Alessandro Abate}$^2$ \Email{alessandro.abate@cs.ox.ac.uk}\\
  \Name{Karl H. Johansson}$^1$ \Email{kallej@kth.se} 
  \\
  \addr $^1$ Division of Decision and Control Systems, KTH Royal Institute of Technology, Sweden\\
   \addr $^2$ Department of Computer Science, University of Oxford, UK\\
   \addr $^3$ Chair of Information-oriented Control, Technical University of Munich, Germany\\
    \addr $^4$ Department of Mechanical Engineering and Materials Science, Duke University, USA
}
\begin{document}

\maketitle

\begin{abstract}%
Distributional reinforcement learning (DRL) enhances the understanding of the effects of the randomness  in the environment by letting agents learn the distribution of a random return, rather than its expected value as in standard RL. 
At the same time, a main challenge in DRL is that policy evaluation in DRL typically relies on the representation of the return distribution, which needs to be carefully designed.
In this paper, we address this challenge for a special class of DRL problems that rely on discounted linear quadratic regulator (LQR) for control, advocating for a new distributional approach to LQR, which we call \emph{distributional LQR}. Specifically, we provide  a closed-form expression of the distribution of the random return which, remarkably, is applicable to all exogenous disturbances on the dynamics, as long as they are independent and identically distributed (i.i.d.).
While the proposed exact return distribution consists of infinitely many random variables, we show that this distribution can be approximated by a finite number of random variables, and the associated approximation error can be analytically bounded under mild assumptions. 
Using the approximate return distribution, we propose a zeroth-order policy gradient algorithm for risk-averse LQR using the Conditional Value at Risk (CVaR) as a measure of risk. 
Numerical experiments are provided to illustrate our theoretical results.
\end{abstract}

\begin{keywords}%
  Distributional LQR, distributional RL, policy evaluation, risk-averse control%
\end{keywords}



\section{Introduction}

In reinforcement learning, the value of implementing a policy at a given state is captured by a value function, which models the expected sum of returns following this prescribed policy. 
Recently, \citet{bellemare2017distributional}  proposed the notion of distributional reinforcement learning (DRL), which learns the return distribution of a policy from a given state, instead of only its expected return. 
Compared to the scalar expected value function, the return distribution is infinite-dimensional and contains far more information. 
It is, therefore, not surprising that a few DRL algorithms, including C51 \citep{bellemare2017distributional}, D4PG \citep{barth2018distributed}, QR-DQN \citep{dabney2018distributional} and SDPG \citep{singh2022sample}, dramatically improve the empirical performance in practical applications over their non-distributional counterpart. 

In DRL, the practical effectiveness of algorithms builds on the theory by \citet{bellemare2017distributional}, where the distributional Bellman operator is shown to be a contraction in the (maximum form of) the Wasserstein metric between probability distributions.
However, it is usually difficult to  characterise the exact return distribution in DRL with finite data. 
%
Approximations of the return distribution are thus necessary to make it computable in practice.
To address this challenge, \citet{bellemare2017distributional} propose a categorical method that partitions the return distribution into a finite number of uniformly spaced atoms in a fixed region.
One drawback of this method is that it relies on prior knowledge of the range of the returned values. 
To address this limitation, a quantile function method 
\citep{dabney2018distributional} and a sample-based method \citep{singh2022sample} have been recently proposed. 
%
However,  these works cannot provide an analytical expression for the approximation error, and computational cost needs to be decided manually to guarantee approximation accuracy.

In this paper, we characterise the return distribution of the random cost for the classical discounted linear quadratic regulator (LQR) problem, which we term \emph{distributional LQR}.  
To the best of our knowledge, the return distribution in LQR has not been explored in the literature. 
Our contributions are summarised as follows:

\begin{enumerate}
    \item 
    We provide an analytical expression of the random return for distributional LQR problems and prove that this return function is a fixed-point solution of the distributional Bellman equation. 
    Specifically, we show that the proposed analytical expression consists of infinitely many random variables and holds for arbitrary i.i.d. exogenous disturbances, e.g., non-Gaussian noise or noise with non-zero mean.  
    \item We develop an approximation of the distribution of the random return using a finite number of random variables. Under mild assumptions, we theoretically show that the sup of the difference between the exact  and  approximated  return distributions deceases linearly with the numbers of random variables: this is also validated by numerical experiments.
    \item The proposed analytical return distribution provides a theoretical foundation for distributional LQR, allowing for general optimality criteria for policy improvement. In this work, we employ the return distribution to analyse risk-averse LQR problems using the Conditional Value at Risk (CVaR) as the risk measure. Since the gradient of CVaR is generally difficult to compute analytically, we propose a risk-averse policy gradient algorithm that relies on the zeroth-order optimisation to seek an optimal risk-averse policy. Numerical experiments are provided to showcase this application.   
\end{enumerate}
%

\noindent \textbf{Related Work:} 
Most closely related to the problem considered in this paper is work on reinforcement learning for LQR, which focuses on learning the expected return through interaction with the environment; see, e.g., \citet{dean2020sample,tu2018least,fazel2018global,malik2019derivative,li2021distributed,yaghmaie2022linear,zheng2021sample}.
For example, \citet{fazel2018global} propose a model-free policy gradient algorithm for LQR and showed its global convergence with finite polynomial computational and sample complexity.
Moreover, \citet{zheng2021sample} study model-based reinforcement learning for the Linear Quadratic Gaussian problems, in which a model is first learnt from data and then used to design the policy. 
However, all these works rely on the expected return instead of the return distribution, 
hence these methods cannot  be applied here.

Since the return distribution captures the intrinsic randomness of the long-term cost, it provides a natural framework to consider more general optimality criteria, e.g., optimal risk-averse policies. 
There exist recent works on risk-averse policy design for DRL, including
\citet{singh2020improving,dabney2018implicit,tang2019worst}. For example, the work in \citet{dabney2018implicit} use the quantile function to approximate the return distribution, which is then applied to design risk-sensitive policies for Atari games. On the other hand, \citet{singh2020improving} show that risk-averse DRL achieves robustness against system disturbances in continuous control tasks. 
All these works focus on empirical improvements in specific tasks, however, without theoretical analysis.
Related to this paper is also work on risk-sensitive LQR, which has been studied in 
\citet{van2015distributionally,tsiamis2021linear,kim2021distributional,chapman2021toward,kishida2022risk}. Similarly, these methods however do not analyse the return distribution. 

\vspace{-0.2cm}
\section{Problem Statement}
Consider a discrete-time linear dynamical system:
\begin{align}
    x_{t+1} = A x_t + B u_t +v_t ,
\end{align}
where $x_t \in \mathbb{R}^n$, $u_t\in \mathbb{R}^p$, $v_t \in \mathbb{R}^n$ are the system state, control input, and the exogenous disturbance, respectively. 
We assume that the exogenous disturbances $v_t$ with bounded moments, $t\in \mathbb{N}$, are i.i.d. sampled from a distribution $ \mathcal{D}$ of arbitrary form.

\vspace{-0.2cm}
\subsection{Classical LQR}
The canonical  LQR problem aims to find a control policy $\pi: \mathbb{R}^n \rightarrow \mathbb{R}^p$ to minimise the objective
\begin{align}
    J(u) 
    = \mathbb{E}\left[ \sum_{t=0}^{\infty} \gamma^t (x_t^T Q x_t + u_t^T R u_t)  \right],
\end{align}
where $Q,R$ are positive-definite constant matrices and $\gamma \in (0,1)$ is a discount parameter. Given a control policy $\pi$, let $V^{\pi}(x) = \mathbb{E}\left[ \sum_{t=0}^{\infty} \gamma^k (x_t^T Q x_t + u_t^T R u_t)  \right]$ denote the expected return from an initial state $x_0 =x$ with $ u_t = \pi( x_t)$.
For the static linear policy $\pi(x_t)=K x_t$, the value function $V^{\pi}(x)$ satisfies the Bellman equation
\begin{align}\label{eq:Bellman expectation}
    V^{\pi}(x) 
    &= x^T (Q + K^T R K) x + \gamma \mathop{\mathbb{E}}_{X' = (A+BK)x+v_0 } [V^{\pi}(X')],  
\end{align} 
where the capital letter $X'$ denotes a random variable over which we take the expectation.

When the exogenous disturbances $v_t$ are normally distributed with zero mean, the value function is known to take the quadratic form  $V^{\pi}(x) = x^T P x +q$, where $P>0$ is the solution of the Lyapunov equation $P = Q+ K^T R K + \gamma A_K^T P A_K$ and $q$ is a scalar related to the variance of $v_t$. In particular, the optimal control feedback gain is obtained as $K^*=-\gamma(R+\gamma B^TPB)^{-1}PA$ and $P$ is the solution to the classic Riccati equation $P = \gamma A^T P A - \gamma^2 A^T P B (R+\gamma B^T P B)^{-1} B^T P A+Q$.




\subsection{Distributional LQR}
Motivated by the advantages of DRL in better understanding the effects of the randomness in the environment and in considering more general optimality criteria, in this paper we propose a distributional approach to the LQR problem.
Unlike classical reinforcement learning, which relies on expected returns, DRL \citep{bdr2022} relies on the distribution of random returns.
The return distribution characterises the probability distribution of different returns generated by a given policy and, as such, it contains much richer information on the performance of a given policy compared to the expected return.
In the context of LQR, we
 denote by $G^{\pi}(x)$ the random return using the static control strategy $u_t = \pi( x_t)$ from the initial state $x_0=x$, which is defined as
\begin{align}
    G^{\pi}(x) 
    = \sum_{t=0}^{\infty} \gamma^t (x_t^T Q x_t + u_t^T R u_t) , \quad  u_t = \pi( x_t),x_0 = x.
\end{align}
It is straightforward to see that the expectation of $G^{\pi}(x)$ is equivalent to the value function $V^{\pi}(x)$. 
The standard Bellman equation in \eqref{eq:Bellman expectation} decomposes the long-term expected return into an immediate stage cost plus the expected return of future actions starting at the next step. 
Similarly, we can define the distributional Bellman equation for the random return as 
\begin{align}\label{eq:rv:Bellman}
    G^{\pi}(x) & \mathop{=}^{D} x^TQx+\pi(x)^TR\pi(x) + \gamma G^{\pi}(X'), \quad X' = Ax+B\pi(x)+v_0.
\end{align}
Here we use the notation $\mathop{=}\limits^{D}$ to denote that two random variables $Z_1,Z_2$ are equal in distribution, i.e., $Z_1 \mathop{=}\limits^{D} Z_2$. Note  that $X'$ denotes a random variable, as in \eqref{eq:Bellman expectation}.
Compared to the expected return in LQR, which is a scalar, here the return distribution is infinite-dimensional and can have a complex form. 
It is  challenging to estimate an infinite-dimensional function exactly with finite data and thus an approximation of the return distribution is  necessary in practice. 

In this paper,  we first analytically characterise the random return for the LQR problem. Then we  show how to  approximate the  distribution of the random return using finite random variables, so such that the approximated distribution is computationally tractable and the  approximation error is bounded. The proposed distributional LQR framework allows us to consider more general optimality criteria, which we demonstrate by using the proposed return distribution to develop a policy gradient algorithm for risk-averse LQR.

\section{Main Results}
\subsection{Exact Form of the Return Distribution}
In this section, we precisely characterise the distribution of the random return  that satisfies the distributional  Bellman equation \eqref{eq:rv:Bellman}. 
Given a static linear policy $\pi(x_t)=K x_t$, we denote by $G^K(x)$ the random return $G^{\pi}(x)$  under the policy $\pi(x_t)$ from the initial state $x_0=x$ , which is defined as
\begin{align*}
    G^{K}(x) =  \sum_{t=0}^{\infty} \gamma^t x_t^T (Q+K^T R K) x_t  , \quad x_0 = x.
\end{align*}
The random return  $G^{K}(x)$ satisfies the following distributional Bellman equation  
\begin{align}\label{eq:rv:bellman}
    G^{K}(x) & \mathop{=}^{D} x^TQ_Kx + \gamma G^{K}(X'),\quad X'=A_K x+ v_0,
\end{align}
where $A_K:=A+BK$ and $Q_K := Q+ K^T R K$. 
In the following theorem, we provide an explicit expression of the random return $G^K(x)$.

\begin{theorem}
Suppose that the feedback gain  $K$ is stabilizing, i.e., $A_K=A+BK$ is stable. Let  \begin{align}\label{eq:dist_func}
    G^{K}(x) = x^T P x +  \sum_{k = 0 }^{\infty}  \gamma^{k+1}  w_k^T P w_k +  2 \sum_{k = 0 }^{\infty} \gamma^{k+1} w_k^T P A_K^{k+1}x + 
 2  \sum_{k = 1 }^{\infty} \gamma^{k+1} w_k^T P \sum_{\tau=0}^{k-1} A_K^{k-\tau}w_{\tau},
\end{align}
where  $P$ is obtained from the algebraic Riccati equation $P = Q+ K^T R K + \gamma A_K^T P A_K$, and the random variables $w_k \sim \mathcal{D} $ are independent from each other for all $k\in\mathbb{N}$. Then, the random variable $G^{K}(x)$ defined in 
    \eqref{eq:dist_func} is a fixed point solution to the distributional Bellman equation \eqref{eq:rv:bellman}.
\end{theorem}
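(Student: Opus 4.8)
The plan is to verify directly that the random variable $G^K(x)$ defined in \eqref{eq:dist_func} satisfies the distributional Bellman equation \eqref{eq:rv:bellman}. That is, I would substitute the closed-form expression into the right-hand side $x^T Q_K x + \gamma G^K(X')$ with $X' = A_K x + v_0$, and show that the resulting random variable has the same distribution as $G^K(x)$. The natural strategy is to relabel the noise variables: in the expression for $G^K(X')$, the i.i.d.\ sequence $(w_k)_{k\ge 0}$ appearing there plays the role of the noise applied at times $1, 2, 3, \dots$ in the original problem, while $v_0$ is the time-$0$ noise. So I would set $w_0' := v_0$ and $w_{k}' := w_{k-1}$ for $k \ge 1$ (a new i.i.d.\ sequence with the same law $\mathcal D$), and check that $x^T Q_K x + \gamma G^K(A_K x + v_0)$, rewritten in terms of the $w_k'$, is \emph{identically} (not merely in distribution) equal to the expression \eqref{eq:dist_func} evaluated with noise sequence $(w_k')$.

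First I would record the algebraic identity $Q_K = Q + K^T R K = P - \gamma A_K^T P A_K$ coming from the Riccati equation, so that $x^T Q_K x = x^T P x - \gamma x^T A_K^T P A_K x$; this is what lets the leading $x^T P x$ term reproduce itself. Then I would expand $\gamma G^K(A_K x + v_0)$ term by term using \eqref{eq:dist_func} with initial state $A_K x + v_0$ and noise $(w_k)_{k\ge0}$: the quadratic-in-state term gives $\gamma (A_K x + v_0)^T P (A_K x + v_0)$, which splits into $\gamma x^T A_K^T P A_K x$ (cancelling against the term from $x^T Q_K x$), $\gamma v_0^T P v_0$, and a cross term $2\gamma x^T A_K^T P v_0$. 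After the substitution $w_0' = v_0$, $w_{k+1}' = w_k$, I would match: the $\gamma v_0^T P v_0$ and $\sum_{k\ge0}\gamma^{k+2} w_k^T P w_k$ terms combine into $\sum_{k\ge 0}\gamma^{k+1}(w_k')^T P w_k'$; the cross term $2\gamma x^T A_K^T P v_0$ together with the shifted $2\sum_{k\ge0}\gamma^{k+2} w_k^T P A_K^{k+1}(A_K x)$ terms combine into $2\sum_{k\ge0}\gamma^{k+1}(w_k')^T P A_K^{k+1} x$; and the remaining pieces — the $2\sum_{k\ge 0}\gamma^{k+2} w_k^T P A_K^{k+1} v_0$ terms and the shifted double-sum — reassemble into the double-sum $2\sum_{k\ge1}\gamma^{k+1}(w_k')^T P\sum_{\tau=0}^{k-1}A_K^{k-\tau}w_\tau'$, with the $v_0 = w_0'$ contributions supplying exactly the $\tau = 0$ slice.

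The main obstacle, and the step requiring genuine care rather than routine bookkeeping, is the last one: the double-sum in \eqref{eq:dist_func} has a subtle index structure, and after shifting the noise indices one must check that the newly-appearing terms involving $v_0$ land precisely in the "missing" $\tau=0$ entries of the shifted double-sum and that no term is double-counted or dropped. I would handle this by writing out the finite-horizon truncation (sum up to $k = N$) explicitly, carrying out the reindexing there where everything is a finite sum and reordering is unambiguous, confirming the identity term-by-term, and only then passing to $N \to \infty$. Convergence of all the series — needed both to make $G^K(x)$ well-defined and to justify the limit — follows from stability of $A_K$ (so $\|A_K^k\| \le c\rho^k$ with $\rho < 1$, $\gamma < 1$) together with the assumed bounded moments of $\mathcal D$; I would note this at the outset so that the manipulations are on firm footing. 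Since the final matched expression is literally \eqref{eq:dist_func} with the i.i.d.\ sequence $(w_k')$ in place of $(w_k)$, and $(w_k') \overset{D}{=} (w_k)$ as sequences, the two sides of \eqref{eq:rv:bellman} agree in distribution, which is exactly the claim.
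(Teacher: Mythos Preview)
Your proposal is correct and follows essentially the same approach as the paper: substitute \eqref{eq:dist_func} into the right-hand side of \eqref{eq:rv:bellman}, expand, relabel the noise via $w_0' := v_0$, $w_k' := w_{k-1}$ (the paper calls this sequence $\xi_k$), and match terms to recover \eqref{eq:dist_func} with the relabeled sequence. The only difference is that you are more careful than the paper about justifying convergence of the series and propose doing the reindexing on finite truncations first; the paper simply manipulates the infinite sums directly without comment.
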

\begin{proof}
Recall that $X' = A_K x + v_0$, where $v_0$ is a  random variable sampled from the distribution $\mathcal{D}$ and is independent from $w_k$, $k\in \mathbb{N}$, in \eqref{eq:dist_func}. 
Substituting \eqref{eq:dist_func} into the right hand side of the equation \eqref{eq:rv:bellman}, we have that
\begin{align*}
    & x^T(Q+K^T R K)x + \gamma G^{K}(X') \\
    =  & x^TQ_Kx + \gamma X'^T P X' +  \sum_{t=0}^{\infty} \gamma^{t+2} w_t^T P w_t + 2 \sum_{t=0}^{\infty} \gamma^{t+2} w_t^T  P A_K^{t+1}  X' \\
    &+ 2  \sum_{t=1}^{\infty} \gamma^{t+2} w_t^T P A_K \sum_{i=0}^{t-1} A_K^{t-1-i} w_i  \\
    = & x^TQ_Kx + \gamma (A_K x + v_0)^T P (A_K x + v_0) + \gamma^2 \sum_{t=0}^{\infty} \gamma^t w_t^T P w_t  + 2 \gamma^2 \sum_{t=1}^{\infty} \gamma^t w_t^T P \sum_{i=0}^{t-1} A_K^{t-i} w_i \\
    &+ 2\gamma^2 \sum_{t=0}^{\infty} \gamma^t w_t^T  P A_K^{t+1}  (A_K x + v_0) \\
    = & x^T(Q_K + \gamma A_K^T P A_K)x + \underbrace{\gamma v_0^T P v_0 + \gamma^2 \sum_{t=0}^{\infty} \gamma^t w_t^T P w_t }_{:=T_1} +\underbrace{ 2\gamma v_0^T PA_K x + 2\gamma^2 \sum_{t=0}^{\infty} \gamma^t w_t^T  P A_K^{t+2} x}_{:=T_2} \\
    & + \underbrace{2 \gamma^2 \sum_{t=1}^{\infty} \gamma^t w_t^T P  \sum_{i=0}^{t-1} A_K^{t-i} w_i + 2\gamma^2 \sum_{t=0}^{\infty} \gamma^t w_t^T  P  A_K^{t+1} v_0}_{:=T_3}.
\end{align*}
Define $ \xi_0 :=v_0$, $\xi_t = w_{t-1}$, $t=1,2,\ldots$. 
From the definition of the term $T_1$, we have that
\begin{align*}
    T_1 & = \gamma v_0^T P v_0 + \gamma^2 \sum_{t=0}^{\infty} \gamma^t w_t^T P w_t 
     \mathop{=}^{k=t+1}  \gamma \xi_0^T P \xi_0 + \gamma \sum_{k=1}^{\infty} \gamma^k \xi_{k}^T P \xi_{k} 
    =  \gamma \sum_{k=0}^{\infty} \gamma^k \xi_{k}^T P \xi_{k}.
\end{align*}
For the term $T_2$, we have that
\begin{align*}
    T_2 & = 2\gamma v_0^T PA_K x + 2\gamma^2 \sum_{t=0}^{\infty} \gamma^t w_t^T  P A_K^{t+2} x 
    = 2\gamma \xi_0^T P A_K x + 2\gamma^2 \sum_{t=0}^{\infty} \gamma^t \xi_{t+1}^T  P A_K^{t+2} x \\
    &\mathop{=}^{k=t+1}  2\gamma \xi_0^T P A_K x + 2\gamma \sum_{k=1}^{\infty} \gamma^k \xi_{k}^T  P A_K^{k+1} x 
    =  2\gamma  \sum_{k=0}^{\infty} \gamma^k \xi_k^T P A_K^{k+1} x.
\end{align*}
Using similar techniques for the term $T_3$, we obtain that
$T_3 = 2 \gamma \sum_{k=1}^{\infty} \gamma^{k} \xi_{k}^T P A_K  \sum_{i=0}^{k-1} A_K^{k-1-i} \xi_i.$ 
Due to the fact that $P =Q+ K^T R K + \gamma A_K^T P A_K$, we have 
\begin{align}\label{eq:proof:P1:temp1}
    & x^TQ_Kx + \gamma G^{K}(X') 
    =  x^T P x + T_1 +T_2 +T_3 \nonumber \\
    = & x^T P x + \gamma \sum_{k=0}^{\infty} \gamma^k \xi_{k}^T P \xi_{k} + 2\gamma  \sum_{k=0}^{\infty} \gamma^k x^T P A_K^{k+1} \xi_k + 2 \gamma \sum_{k=1}^{\infty} \gamma^{k} \xi_{k}^T P A_K  \sum_{i=0}^{k-1} A_K^{k-1-i} \xi_i,
\end{align}
which is in the same form as in \eqref{eq:dist_func}.
Since $\{ \xi_k\}_{k=0}^{\infty}$ and $\{ w_k\}_{k=0}^{\infty}$ are i.i.d.,
we have that the two random variables \eqref{eq:dist_func} and \eqref{eq:proof:P1:temp1} have the same distribution, i.e.,
$G^{K}(x)  \mathop{=}\limits^{D}  x^TQ_Kx + \gamma G^{K}(X').$
\end{proof}


\subsection{Approximation of the Return Distribution with Finite Parameters}\label{Sec:Approxreturn}

In this section, we show how to approximate the random return defined in \eqref{eq:dist_func} using a finite number of random variables. Considering only the first $N$ terms in the summations in the expression in \eqref{eq:dist_func}  and disregarding the terms for $k$ larger than $N$ yields the following:   
\begin{align}\label{eq:approxreturn}
    {G}^{K}_N(x) =  x^T P x +  \sum_{k = 0 }^{N-1}  \gamma^{k+1}  w_k^T P w_k +  2 \sum_{k = 0 }^{N-1} \gamma^{k+1} w_k^T P A_K^{k+1}x + 
 2 \sum_{k = 1 }^{N-1} \gamma^{k+1} w_k^T P \sum_{\tau=0}^{k-1} A_K^{k-\tau}w_{\tau}.
\end{align}
Let $F^{K}_x$ and ${F}^{K}_{x,N}$ denote the cumulative distribution function (CDF) of $G^{K}(x)$ and ${G}_N^{K}(x)$, respectively. The following theorem provides an upper bound on the difference between $F^{K}_x$ and ${F}^{K}_{x,N}$, and shows that  the sequence $\{{G}^{K}_N(x)\}_{N\in \mathbb{N}}$ converges pointwise in distribution to $G^{K}(x)$,  $\forall x\in\mathbb{R}^n$. 

\begin{theorem}\label{Theorem:approx}
Assume that the probability density functions of $w_k$ exist and are bounded, and satisfy $\mathbb{E}[w_k^T w_k] \leq \sigma_0^2$, $\mathbb{E}[\left\|w_k\right\|_2] \leq \mu_0 $, for $\forall k\in \mathbb{N}$.
Suppose that the feedback gain  $K$ is stabilizing such that $\left\|A_K\right\|_2 = \rho_K <1$. Then, the sup difference between the CDFs $F^K_x$ and ${F}^{K}_{x,N}$ is bounded by
\begin{align}\label{eq:approximat:bound}
     \sup_{z}|{F}^{K}_x(z)-{F}^{K}_{x,N}(z) | \leq C \gamma^N,
\end{align}
where $C$ is a constant that depends on the matrices $A,B,Q,R,K$, the initial state value $x$, and the parameters $\gamma, \rho_K, \sigma_0,\mu_0$. 
\end{theorem}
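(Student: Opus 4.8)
The plan is to write $G^{K}(x)=G^{K}_N(x)+\Delta_N$ with a remainder $\Delta_N$ whose moments decay like $\gamma^N$, and then to transfer this decay to the Kolmogorov distance by a conditioning argument that exposes a bounded density. Comparing \eqref{eq:dist_func} with \eqref{eq:approxreturn}, the remainder is exactly the collection of terms of index $k\ge N$:
\begin{align*}
\Delta_N=\sum_{k\ge N}\gamma^{k+1}w_k^TPw_k+2\sum_{k\ge N}\gamma^{k+1}w_k^TPA_K^{k+1}x+2\sum_{k\ge N}\gamma^{k+1}w_k^TP\sum_{\tau=0}^{k-1}A_K^{k-\tau}w_\tau .
\end{align*}
The first step is to show $\mathbb{E}|\Delta_N|\le c_1\gamma^N$ with $c_1$ independent of $N$. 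Term by term this uses $\|A_K^k\|_2\le\rho_K^k$, the discount factors $\gamma^{k+1}$, the independence of distinct $w_k$, and $\mathbb{E}[w_k^Tw_k]\le\sigma_0^2$, $\mathbb{E}\|w_k\|_2\le\mu_0$; the only delicate point is the double sum, where one must regroup $\sum_{\tau=0}^{k-1}\rho_K^{k-\tau}\|w_\tau\|_2=\rho_K^{\,k-N+1}\sum_{j\ge0}\rho_K^{\,j}\|w_{N-1-j}\|_2$ so that the exponent of $\rho_K$ stays bounded below and no spurious factor of $N$ appears; the resulting geometric and arithmetic--geometric series are then all $O(\gamma^N)$. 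A cleaner route to the same bound is the self-similar identity $\Delta_N\mathop{=}\limits^{D}\gamma^N\bigl(G^{K}(x_N)-x_N^TPx_N\bigr)$, where $x_N=A_K^Nx+\sum_{j<N}A_K^{N-1-j}w_j$ and the copy of $G^{K}$ uses fresh noise: since $\mathbb{E}\|x_N\|_2\le\rho_K^N\|x\|_2+\mu_0/(1-\rho_K)$ is bounded uniformly in $N$ and, by \eqref{eq:dist_func}, $\mathbb{E}\bigl[|G^{K}(y)-y^TPy|\bigr]\le c+c'\|y\|_2$, taking expectations yields $\mathbb{E}|\Delta_N|=O(\gamma^N)$.

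For the transfer to CDFs, note that (iterating \eqref{eq:rv:bellman}) $G^{K}_N(x)\mathop{=}\limits^{D}\sum_{t=0}^{N-1}\gamma^tx_t^TQ_Kx_t+\gamma^Nx_N^TPx_N$ along the same trajectory $x_0=x\to x_N$, so in this coupling $G^{K}_N(x)$ is a function of $w_0,\dots,w_{N-1}$ and $\Delta_N=\gamma^N h(x_N;w_N,w_{N+1},\dots)$ with $(w_0,\dots,w_{N-1})$ independent of $(w_k)_{k\ge N}$. Conditioning on $x_N$ and $(w_k)_{k\ge N}$ makes $\Delta_N$ a constant while $G^{K}_N(x)$ remains random through the ``noise bridge'' from $x$ to $x_N$; since $G^{K}_N(x)$ is independent of $(w_k)_{k\ge N}$ given $x_N$, one obtains $F^{K}_x(z)=\mathbb{E}[F_{G^{K}_N(x)\mid x_N}(z-\Delta_N)]$ and $F^{K}_{x,N}(z)=\mathbb{E}[F_{G^{K}_N(x)\mid x_N}(z)]$, hence
\begin{align*}
\sup_z\bigl|F^{K}_x(z)-F^{K}_{x,N}(z)\bigr|
&\le\mathbb{E}\Bigl[\sup_z\bigl|F_{G^{K}_N(x)\mid x_N}(z-\Delta_N)-F_{G^{K}_N(x)\mid x_N}(z)\bigr|\Bigr]\\
&\le L'\,\mathbb{E}|\Delta_N|\le L'c_1\gamma^N=:C\gamma^N,
\end{align*}
provided the conditional density of $G^{K}_N(x)$ given $x_N$ is bounded by a constant $L'$ uniform in $x_N$ and $N$. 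Establishing that bound is where the assumption that the $w_k$ have bounded densities is used: given the endpoint $x_N$, $G^{K}_N(x)$ is a quadratic functional of the bridge noise with positive-definite quadratic part (from $Q_K\succ0$, $P\succ0$), and completing the square in one still-free noise coordinate and pushing the resulting bounded density forward through the quadratic map gives a bounded density. Pointwise convergence in distribution then follows since $C\gamma^N\to0$.

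I expect the uniform conditional-density bound to be the crux. The ``soft'' alternative --- bound $\sup_z|F^{K}_x-F^{K}_{x,N}|\le P(|\Delta_N|>\epsilon)+2L\epsilon$ with $L$ a density bound for $G^{K}_N(x)$ and optimise over $\epsilon$ --- only gives $O(\gamma^{N/2})$, since $\Delta_N$ is $\gamma^N$ times a genuinely spread-out variable and the available first (or second) moments yield no better than $P(|\Delta_N|>\epsilon)\le c_1\gamma^N/\epsilon$. Reaching the stated $C\gamma^N$ therefore forces one to control the conditional law of the bridge noise given $x_N$ uniformly over $x_N$ and $N$, including the mild degeneracies that can appear (e.g.\ in low state dimension, where a pure quadratic form of a bounded-density variable may fail to have a bounded density and one must lean on the linear cross-terms). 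Everything else is routine geometric-series bookkeeping and collecting constants: $C$ ends up depending only on $L'$, $c_1$, hence on $A,B,Q,R,K$, the initial state $x$, and $\gamma,\rho_K,\sigma_0,\mu_0$.
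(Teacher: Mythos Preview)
Your decomposition $G^K(x)=G^K_N(x)+\Delta_N$ and the bound $\mathbb{E}|\Delta_N|=O(\gamma^N)$ match the paper exactly (its $Y_N$ is your $\Delta_N$, handled by the same term-by-term geometric sums; your self-similar rewriting via $\gamma^N\bigl(G^K(x_N)-x_N^TPx_N\bigr)$ is a cleaner variant the paper does not use). The divergence is in the CDF-transfer step. The paper does \emph{not} condition on $x_N$: it simply writes the convolution-type identity $\mathbb{P}(G^K_N+Y_N\le z)=\int\mathbb{P}(G^K_N\le z-t)\,\mathbb{P}(Y_N=t)\,dt$, applies the mean-value theorem to the \emph{unconditional} CDF $F^K_{x,N}$ with a bound $L_0$ on the marginal density $f^K_{x,N}$, and obtains $\sup_z|F^K_x-F^K_{x,N}|\le L_0\,\mathbb{E}|Y_N|$ in one line. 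So the paper's route is much shorter and needs only a bound on the unconditional density of $G^K_N(x)$, not your uniform bridge-density bound. The price is that the convolution formula tacitly treats $G^K_N(x)$ and $Y_N$ as independent, which --- as you in effect noticed --- they are not, since $Y_N$ depends on $w_0,\dots,w_{N-1}$ through the cross terms (equivalently, through $x_N$); your conditioning on $\bigl(x_N,(w_k)_{k\ge N}\bigr)$ is precisely the honest repair of that step, and the uniform conditional-density bound you flag as the crux is the new obligation this creates, one the paper never confronts. Your observation that the ``soft'' $\epsilon$-splitting argument yields only $O(\gamma^{N/2})$ is also correct and explains why some Lipschitz-through-density device is unavoidable for the stated rate; the paper does not discuss this alternative.
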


\begin{proof}
Define $Y_N:= G^K(x) - G^K_N(x)$, we have
\begin{align}\label{eq:upp_bound_temp1}
     &\sup_{z}|{F}^{K}_x(z)-{F}^{K}_{x,N}(z) | 
     =\sup_{z}|\mathbb{P}(G^K_N(x) \leq z) -\mathbb{P}(G^K(x)\leq z) | \nonumber \\
     =& \sup_{z}|\mathbb{P}(G^K_N(x)\leq z) -\mathbb{P}(G^K_N(x) +Y_N\leq z) | \nonumber \\
     =& \sup_{z}\Big|\mathbb{P}(G^K_N(x)\leq z) \int_{-\infty}^{\infty} \mathbb{P}(Y_N = t)dt -\int_{-\infty}^{\infty} \mathbb{P}(G^K_N(x) \leq z-t) \mathbb{P}(Y_N = t) dt \Big| \nonumber \\
     =& \sup_{z}\Big| \int_{-\infty}^{\infty} \mathbb{P}(Y_N = t) \big(  F^K_{x,N}(z) -F^K_{x,N}(z-t) \big) dt \Big|.
\end{align}
Since the random variables $w_t$ are i.i.d for all $t>0$ and the probability density function of $w_t$ exists, the function $F^K_{x,N}$ is continuous and differentiable. 
Applying the mean value theorem, when $t>0$ there exists a point $z'\in[z-t,z]$ such that $F^K_{x,N}(z) -F^K_{x,N}(z-t) = f^K_{x,N}(z') t$, where $f^K_{x,N}$ is the probability density function of $G^K_N(x)$. Since the probability density function of $w_t$ is bounded, it further follows  that $f^K_{x,N}$ is bounded.   Then, we have that $|F^K_{x,N}(z) -F^K_{x,N}(z-t)| = |f^K_{x,N}(z') t| \leq L_0 |t|$, where $L_0$ is an upper bound of the probability function $f^K_{x,N}$. Following a similar argument, we can show that this inequality holds when $t\leq 0$. Substituting this inequality into \eqref{eq:upp_bound_temp1}, we obtain
\begin{align}\label{eq:upp_bound_temp2}
   \sup_{z}|{F}^{K}_x(z)-{F}^{K}_{x,N}(z) | \leq \sup_{z}\Big| \int_{-\infty}^{\infty} \mathbb{P}(Y_N = t) L_0 |t| dt \Big| 
     = L_0 \mathbb{E}|Y_N|.
\end{align}
From the definition of $Y_N$, we obtain that
\begin{align*}
     Y_N =& 
     \sum_{k = N }^{\infty}  \gamma^{k+1}  w_k^T P w_k +  2 \sum_{k = N }^{\infty} \gamma^{k+1} w_k^T P A_K^{k+1}x + 2 \sum_{k = N }^{\infty} \gamma^{k+1} w_k^T P \sum_{\tau=0}^{k-1} A_K^{k-\tau}w_{\tau} \nonumber \\
     \mathop{=}^{t=k-N}&    \gamma^N \Big(  \sum_{t = 0 }^{\infty} \gamma^{t+1}  w_{t+N}^T P w_{t+N} +  2 \sum_{t = 0 }^{\infty} \gamma^{t+1} w_{t+N}^T P A_K^{t+N+1}x \nonumber \\
     &+ 2 \sum_{t = 0 }^{\infty} \gamma^{t+1} w_{t+N}^T P \sum_{\tau=0}^{t+N-1} A_K^{t+N-\tau}w_{\tau}  \Big).
\end{align*}
Taking the expectation of the absolute value of $Y_N$, we have
\begin{align*}
    \mathbb{E}|Y_N| \leq &\gamma^N \Big( \sum_{t = 0 }^{\infty} \gamma^{t+1} \mathbb{E} | w_{t+N}^T P w_{t+N} |   +  2 \sum_{t = 0 }^{\infty} \gamma^{t+1} \mathbb{E}| w_{t+N}^T P A_K^{t+N+1}x|\nonumber \\
    &+ 2 \sum_{t = 0 }^{\infty} \gamma^{t+1} \mathbb{E} |w_{t+N}^T P \sum_{\tau=0}^{t+N-1} A_K^{t+N-\tau}w_{\tau} |  \Big) .
\end{align*}
We handle the terms in the above inequality one by one. For the first term, we have that
\begin{align}\label{eq:upp_bound_temp4}
    \sum_{t = 0 }^{\infty} \gamma^{t+1} \mathbb{E} | w_{t+N}^T P w_{t+N} | \leq \sum_{t = 0 }^{\infty} \gamma^{t+1}  \mathbb{E}| \lambda_{\max}(P) w_{t+N}^T w_{t+N} | \leq \lambda_{\max}(P) \sigma_0^2 \frac{\gamma}{1-\gamma}.
\end{align}
For the second term, we have that
\begin{align}\label{eq:upp_bound_temp5}
    &2 \sum_{t = 0 }^{\infty} \gamma^{t+1} \mathbb{E}| w_{t+N}^T P A_K^{t+N+1}x|\leq  2\mu  \sum_{t = 0 }^{\infty} \gamma^{t+1} \left\| P \right\|_2 \left\| A_K^{t+N+1} \right\|_2 \left\|x \right\|_2 \nonumber \\
    \leq& 2\mu  \sum_{t = 0 }^{\infty} \gamma^{t+1} \left\| P\right\|_2 \rho_K^{t+N-1} \left\|x \right\|_2 \leq 
    2\mu \left\| P\right\|_2 |x|\frac{\gamma \rho_K^{N-1}}{1-\gamma \rho_K} \leq 2\mu \left\| P\right\|_2 |x|\frac{\gamma}{1-\gamma \rho_K},
\end{align}
where the second inequality is due to the fact that $\left\|A_K^{t+N+1} \right\|_2\leq (\left\|A_K \right\|_2)^{t+N+1} \leq \rho_K^{t+N+1}$ and the last inequality follows from the fact that $N\geq 1$.
For the third term, we have that
\begin{align}\label{eq:upp_bound_temp8}
    &2 \sum_{t = 0 }^{\infty} \gamma^{t+1} \mathbb{E} |w_{t+N}^T P \sum_{\tau=0}^{t+N-1} A_K^{t+N-\tau}w_{\tau} | 
    \leq   2 \sum_{t = 0 }^{\infty} \gamma^{t+1} \mathbb{E} \left[ \left\| w_{t+N}^T\right\|_2  \left\| P \right\|_2  \left\|\sum_{\tau=0}^{t+N-1} A_K^{t+N-\tau} w_{\tau}\right\|_2 \right] \nonumber \\
    \leq &  2 \mu \left\| P \right\|_2  \sum_{t = 0 }^{\infty} \gamma^{t+1}   \mathbb{E} \left[ \left\|\sum_{\tau=0}^{t+N-1} A_K^{t+N-\tau} w_{\tau}\right\|_2 \right]
    \leq  2 \mu \left\| P \right\|_2  \sum_{t = 0 }^{\infty} \gamma^{t+1}   \mathbb{E} \left[ \sum_{\tau=0}^{t+N-1} \left\|A_K^{t+N-\tau} \right\|_2\left\|w_{\tau}\right\|_2 \right] \nonumber\\
    \leq  &2 \mu^2 \left\| P \right\|_2 \sum_{t = 0 }^{\infty} \gamma^{t+1}  \sum_{\tau=0}^{t+N-1}  \rho_K^{t+N-\tau} \leq 2 \mu^2 \left\| P \right\|_2 \sum_{t = 0 }^{\infty} \gamma^{t+1} \frac{\rho_K}{1-\rho_K} \leq 2 \mu^2 \left\| P \right\|_2 \frac{\gamma \rho_K}{(1-\gamma)(1-\rho_K)},
\end{align}
where the second inequality is due to the fact that $w_{\tau}$ and $w_{t+N}$ are independent and the  second to last inequality follows from the fact that $\sum_{\tau=0}^{t+N-1}  \rho_K^{t+N-\tau} = \sum_{\tau=1}^{t+N} \rho_K^{\tau} \leq \frac{\rho_K}{1-\rho_K} $.
Combining \eqref{eq:upp_bound_temp4}, \eqref{eq:upp_bound_temp5} and \eqref{eq:upp_bound_temp8}, we have that
\begin{align}
    &\sup_{z}|{F}^{K}_x(z)-{F}^{K}_{x,N}(z) | \leq  L_0 \mathbb{E}|Y_N| \nonumber \\
    \leq & L_0 \gamma^N \Big(\lambda_{\max}(P) \sigma_0^2 \frac{\gamma}{1-\gamma} + 2\mu \left\| P\right\|_2 |x|\frac{\gamma}{1-\gamma \rho_K}
    +  2 \mu^2 \left\| P \right\|_2 \frac{\gamma \rho_K}{(1-\gamma)(1-\rho_K)} \Big) 
    :=  C \gamma^N . \nonumber 
\end{align}
The proof is complete and also yields the expression of the constant $C$. 
\end{proof}

\begin{remark}
The bound on the distribution approximation in \eqref{eq:approximat:bound} relies on the conditions of Theorem~\ref{Theorem:approx}, which ensure that the PDF of $G^K_N$ is continuous and bounded. Note that these conditions are not particularly strict, and indeed hold for many  noise distributions commonly used in linear dynamical systems, including Gaussian and uniform. Future work will investigate relaxations of these conditions. 
\end{remark}

\subsection{Numerical Experiments on Quality of the Approximation of the Return Distribution}\label{Sec:NemVer}

In the following experiment, we consider a scalar model with matrices $A=B=1$. Similarly, the weighting matrices in the LQR cost are chosen as $Q=R=1$. The exogenous disturbances are standard normal distributions with zero mean. 

Even for this scalar system, it is impossible to simplify the expression of the exact return distribution, which still depends on an infinite number of random variables.  
Thus, as a baseline for the return distribution, we generate an empirical distribution that approximates the true distribution of the random return. More specifically, we use the Monte Carlo (MC) method to obtain 10000 samples of the random return and use the sample frequency over evenly-divided regions as an approximation of the probability density function. According to the law of large numbers, the empirical distribution approaches the real one as the  number of trials increases. 
Note that, although the MC method provides an alternative way to approximate the return distribution, it relies on using sufficiently many samples that can be time-consuming, and its (statistical) approximation error is generally difficult to analyse. Thus, the MC method is not applicable for practical policy evaluation of distributional LQR, and in this experiment, it is used only to verify our approximate return distribution.
In comparison, the approximate return distribution using finite number of random variables in this paper is analytical for policy evaluation and the corresponding approximation error can be bounded: as such, it is further usable for policy optimisation, as shown in Section~\ref{Sec:riskaversecontrol}. We denote here by $f_N$ the distribution of the approximated random return $G^K_N(x_0)$ obtained considering $N$ random variables. 

\begin{figure}[t]
    \centering
	\subfigure[$\gamma=0.6$, $x_0=1$.]{
	\includegraphics[scale=0.24]{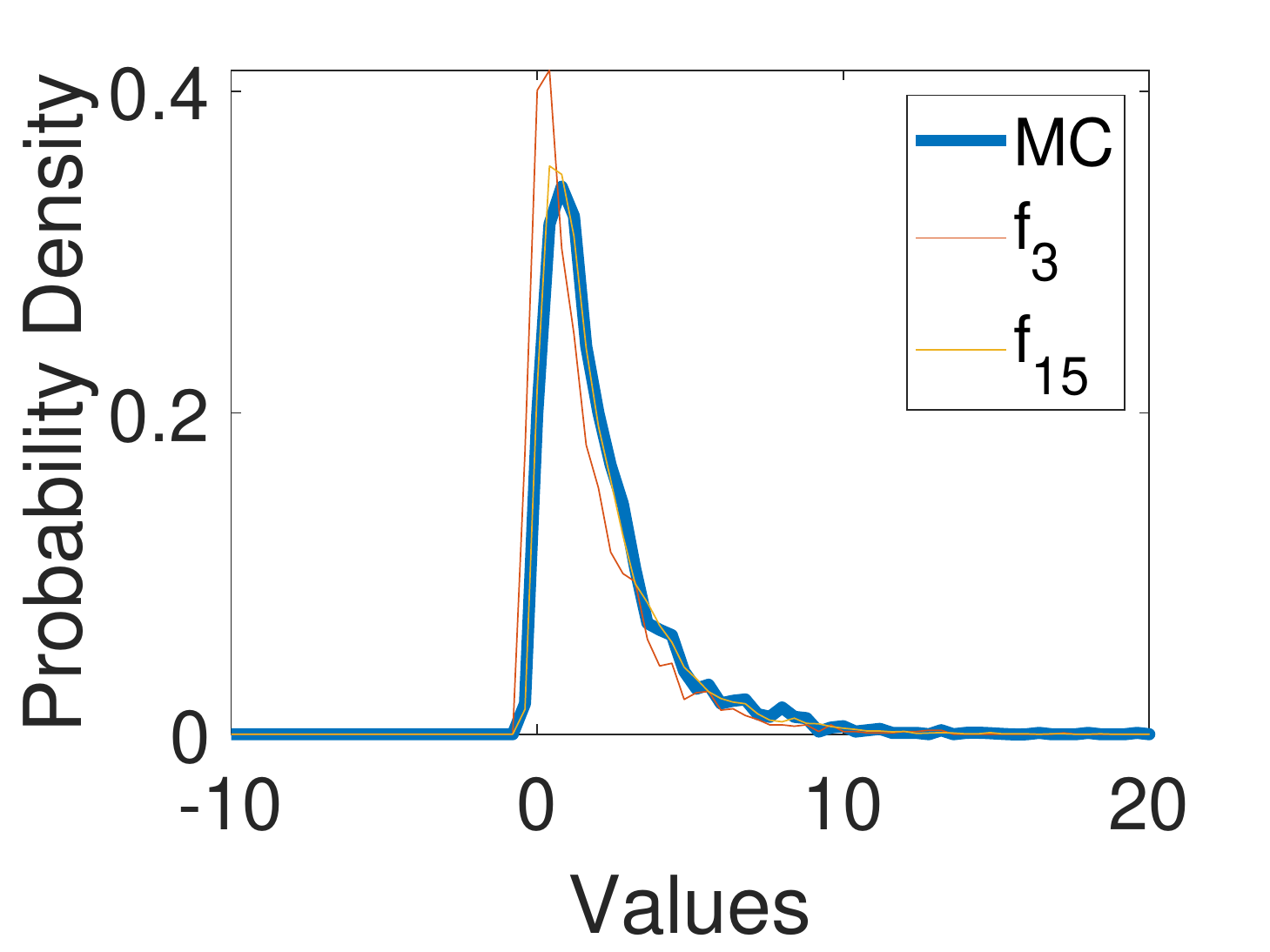}}
	\subfigure[$\gamma=0.8$, $x_0=1$.]{
	\includegraphics[scale=0.24]{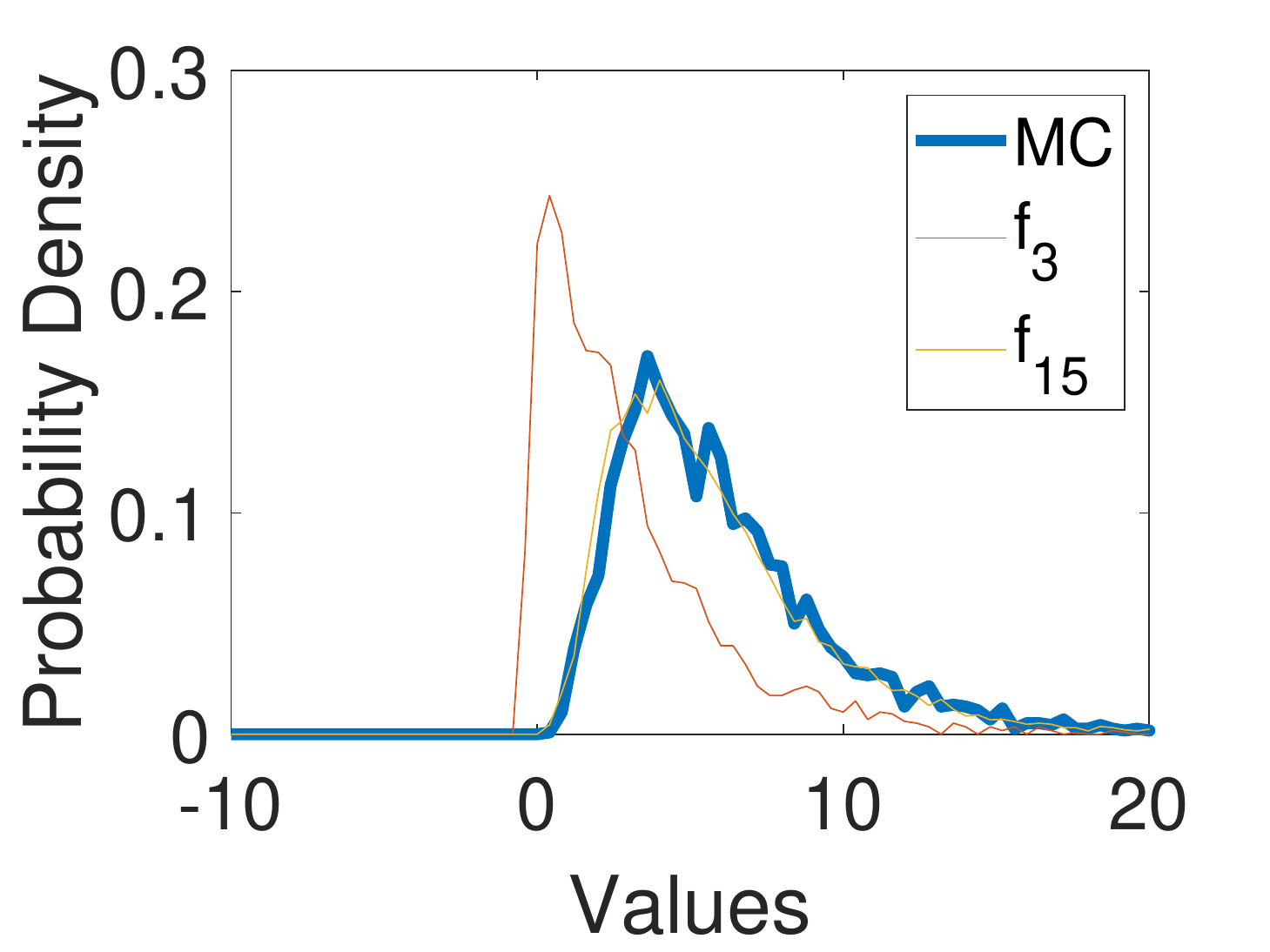}}
        \subfigure[$\gamma=0.6$, $x_0=8$.]{
	\includegraphics[scale=0.24]{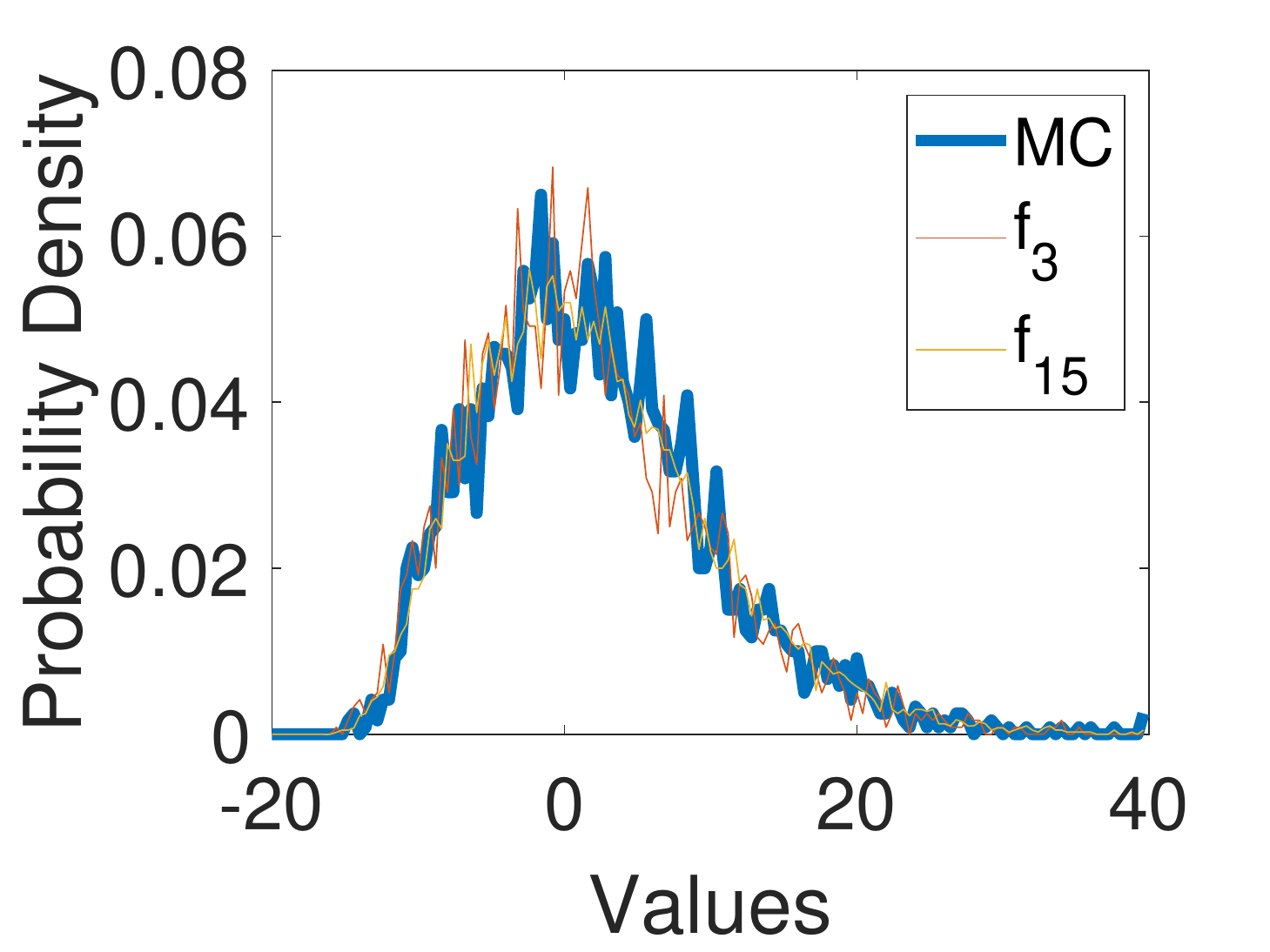}}
        \subfigure[$\gamma=0.85$, $x_0=8$.]{
	\includegraphics[scale=0.24]{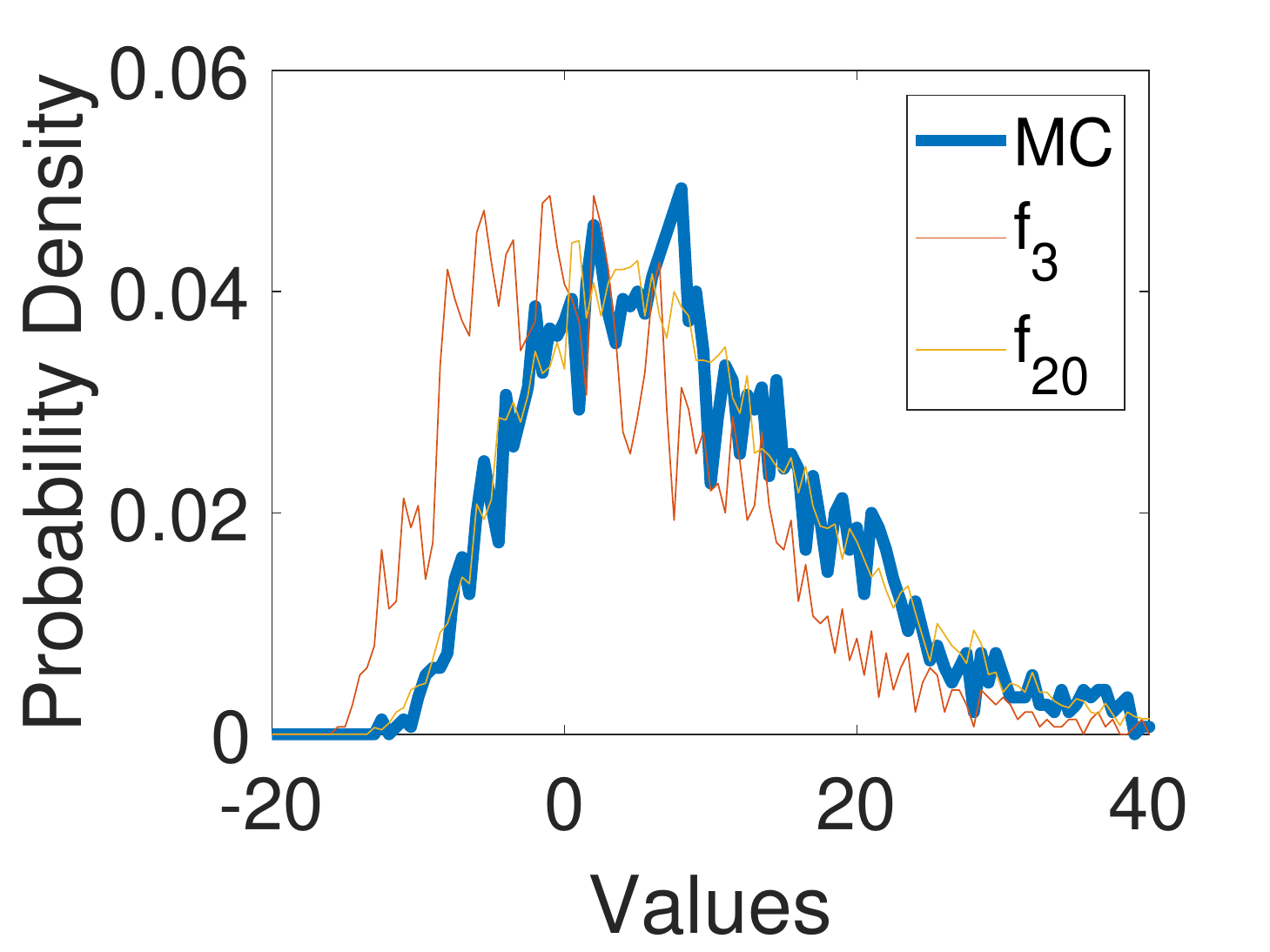}}
	\caption{Return distribution and its approximation with finite number of random variables for different $\gamma$ and $x_0$. MC denotes the distribution returned by the Monte Carlo method and $f_N$ denotes the distribution of the approximated random return $G^K_N(x_0)$.}\label{fig:S1}
 \vspace{-0.2in}
\end{figure}

We fix the feedback gain as $K= -0.4684$ and select different values of $\gamma$ and $x_0$. The results are shown in Fig.~\ref{fig:S1}. Specifically,  Fig.~\ref{fig:S1} (a) and (c) show that when $\gamma$ is small, the return distribution can be well approximated using only few random variables ($N=3$ works well). 
However, when $\gamma$ approaches $1$, more random variables are needed for an accurate approximation: we employ $N=15$ and $N=20$ random variables in the case of $\gamma=0.8$ and $\gamma=0.85$, respectively, as shown in Fig.~\ref{fig:S1} (b) and (d). Moreover, the value of the initial state $x_0$ has an influence on the shape of the return distribution, which can be clearly observed from the scalar case. When $x_0$ is large, the random variable $w_k^T P A_K^{k+1}x_0$ dominates and, therefore, its distribution is close to a Gaussian distribution, as shown in Fig.~\ref{fig:S1} (c) and (d). If instead $x_0$ is  small, then the random variable $w_k^T P w_k$ plays a leading role, so the overall distribution is close to the chi-square one, as shown in Fig.~\ref{fig:S1} (a) and (b). 
In conclusion,  when $N$ is large, the approximate distribution is closer to the distribution obtained from the MC method, and thus to the true distribution.

\section{Application to Risk-Averse LQR}\label{Sec:riskaversecontrol}
In this section, we consider a risk-averse LQR problem and leverage the closed-form expression of the random return $G^K(x)$ to obtain an optimal policy. Since the distribution of the random return $G^K(x)$ consists of an infinite number of random variables, it is computationally unwieldy. 
Instead, we employ the approximate random return $G^K_N(x)$ proposed in Section~\ref{Sec:Approxreturn}.
As a risk measure for the problem at hand, 
we select the well-known Conditional Value at Risk (CVaR) \citep{rockafellar2000optimization}. 
We then construct an approximate risk-averse objective function, as $\hat{\mathcal{C}}_N(K):={\rm{CVaR}}_{\alpha}\left[ {G}_N^K(x)\right]$.  
For a random variable $Z$ with the CDF $F$ and a risk level $\alpha \in (0,1]$, the ${\rm{CVaR}}$ value is defined as ${\rm{CVaR}}_{\alpha}[Z] = \mathbb{E}_F[Z| Z>Z^{\alpha}]$, where $Z^{\alpha}$ is the $1-\alpha$ quantile of the distribution of the random variable $Z$. 
Given this objective function, the goal is to find the optimal risk-averse controller, that is, to select the feedback gain $K$ that minimises $\hat{\mathcal{C}}_N(K)$. 

\subsection{Risk-Averse Policy Gradient Algorithm}


In what follows, we propose a policy gradient method to solve this problem. We assume that the matrices $A,B,Q,R$ are known. The first-order gradient descent step is hard to compute as it hinges on the gradient of the CVaR function. Therefore, we rely on zeroth-order optimisation to derive the policy gradient, as detailed in Algorithm~\ref{alg:algorithm_PG}. 



\begin{algorithm}[t]
\caption{Risk-Averse Policy Gradient}
\begin{algorithmic}[1]
\REQUIRE  initial values $K_0$, $x$, step size $\eta$, smoothing parameter $\delta$, and dimension $n$
    \FOR {$episode \; t=1,\ldots,T$}
        \STATE Sample $\hat{K}_t = K_t + U_t$, where $U_t$ is drawn at random over matrices whose norm is $\delta$;
        \STATE Compute the distribution of the random variable ${G}_N^{\hat{K}_t}$;
        \STATE Compute $\hat{\mathcal{C}}_N(\hat{K}_t)$;
        \STATE $K_{t+1}= K_t - \eta g_t$, where $g_t= \frac{n}{\delta^2} \Big(\hat{\mathcal{C}}(\hat{K}_t)- \hat{\mathcal{C}}(\hat{K}_{t-1}) \Big) U_t $.
    \ENDFOR
\end{algorithmic}\label{alg:algorithm_PG}
\end{algorithm}

Specifically, at each episode $t$, we sample an approximate feedback gain $\hat{K}_t = K_t +U_t$, where $U_t$ is drawn uniformly at random from the set of matrices with norm $\delta$. Given $\hat{K}_t$, we compute the approximate distribution of the random return ${G}_N^{\hat{K}_t}(x)$ in \eqref{eq:approxreturn} and the value of $\hat{\mathcal{C}}_N(\hat{K}_t)$. Then, we can perform the feedback gain update as
  $K_{t+1}= K_t - \eta g_t$,
where $g_t= \frac{n}{\delta^2} \Big(\hat{\mathcal{C}}(\hat{K}_t)- \hat{\mathcal{C}}(\hat{K}_{t-1}) \Big) U_i $. Here, the zeroth-order residual feedback technique proposed in \citet{zhang2022new} is used to reduce the variance.
The theoretical analysis of this algorithm is left as our future work.

\begin{figure}[t]
    \centering
	\subfigure[The $K$ values  when $\alpha=1$.]{
	\includegraphics[scale=0.24]{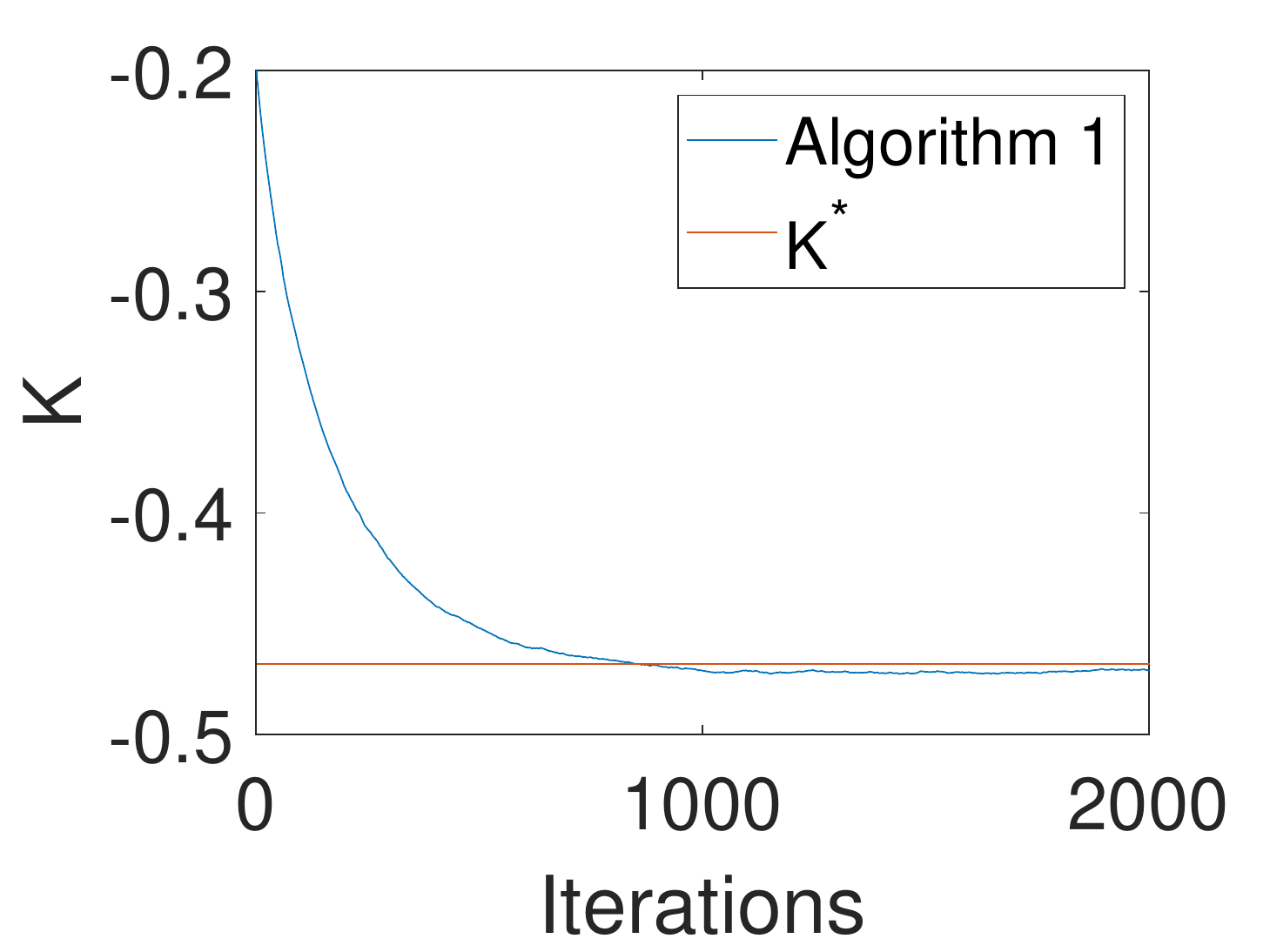}}
	\subfigure[The ${\rm{CVaR}}$ values  when $\alpha=1$.]{
	\includegraphics[scale=0.24]{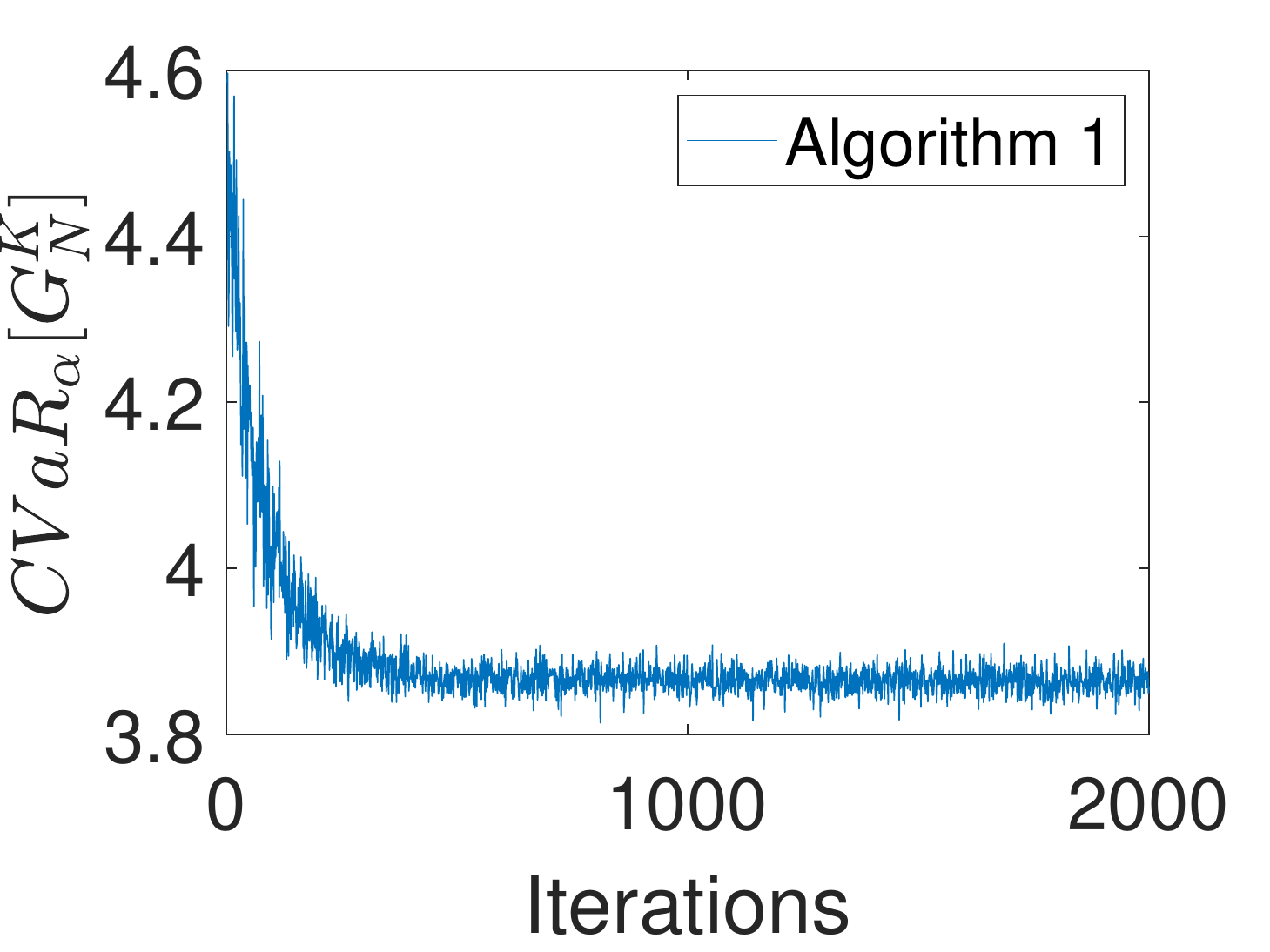}}
        \subfigure[The $K$ values  when $\alpha=0.4$.]{
	\includegraphics[scale=0.24]{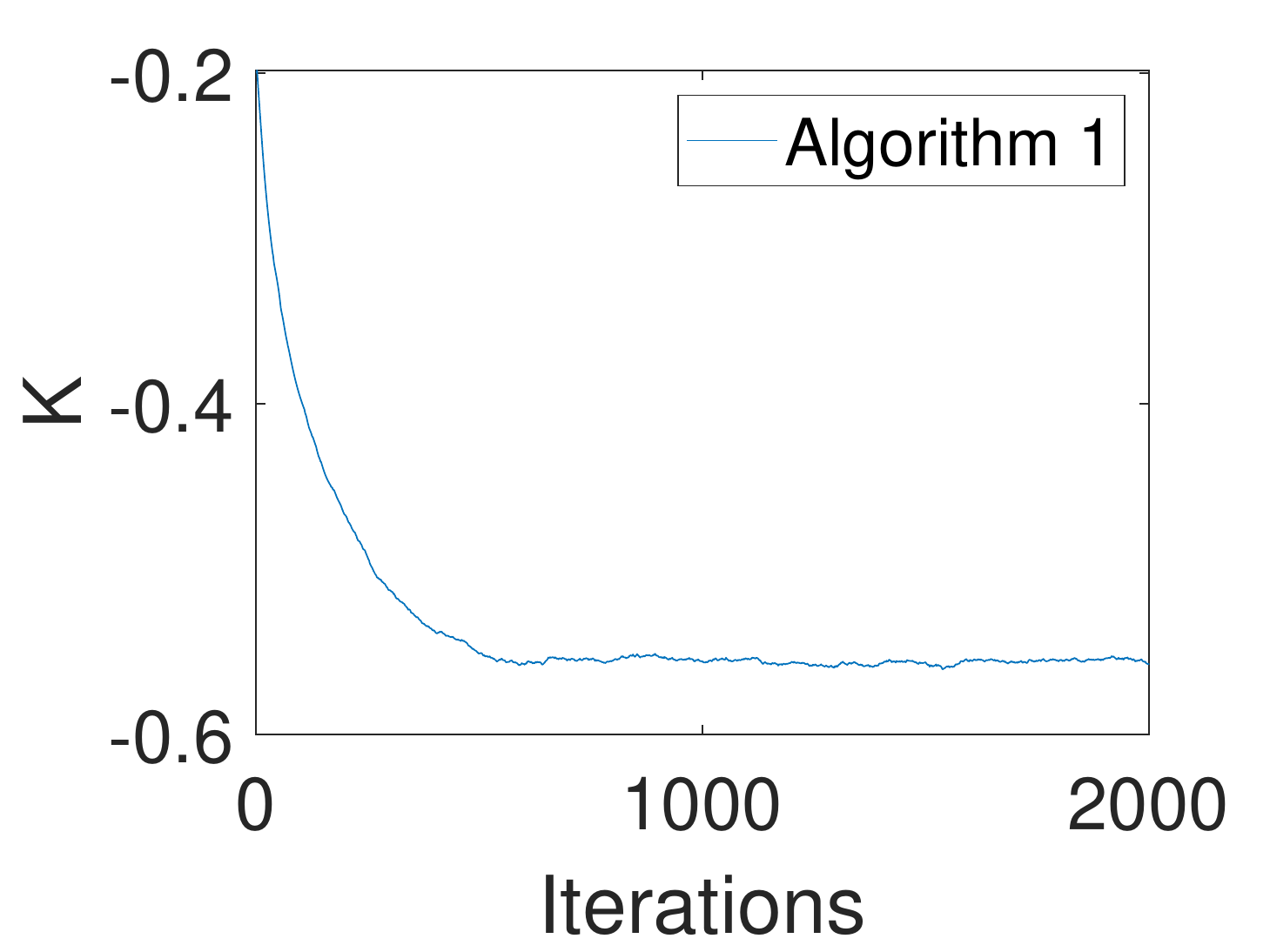}}
        \subfigure[The ${\rm{CVaR}}$ values when $\alpha=0.4$.]{
	\includegraphics[scale=0.24]{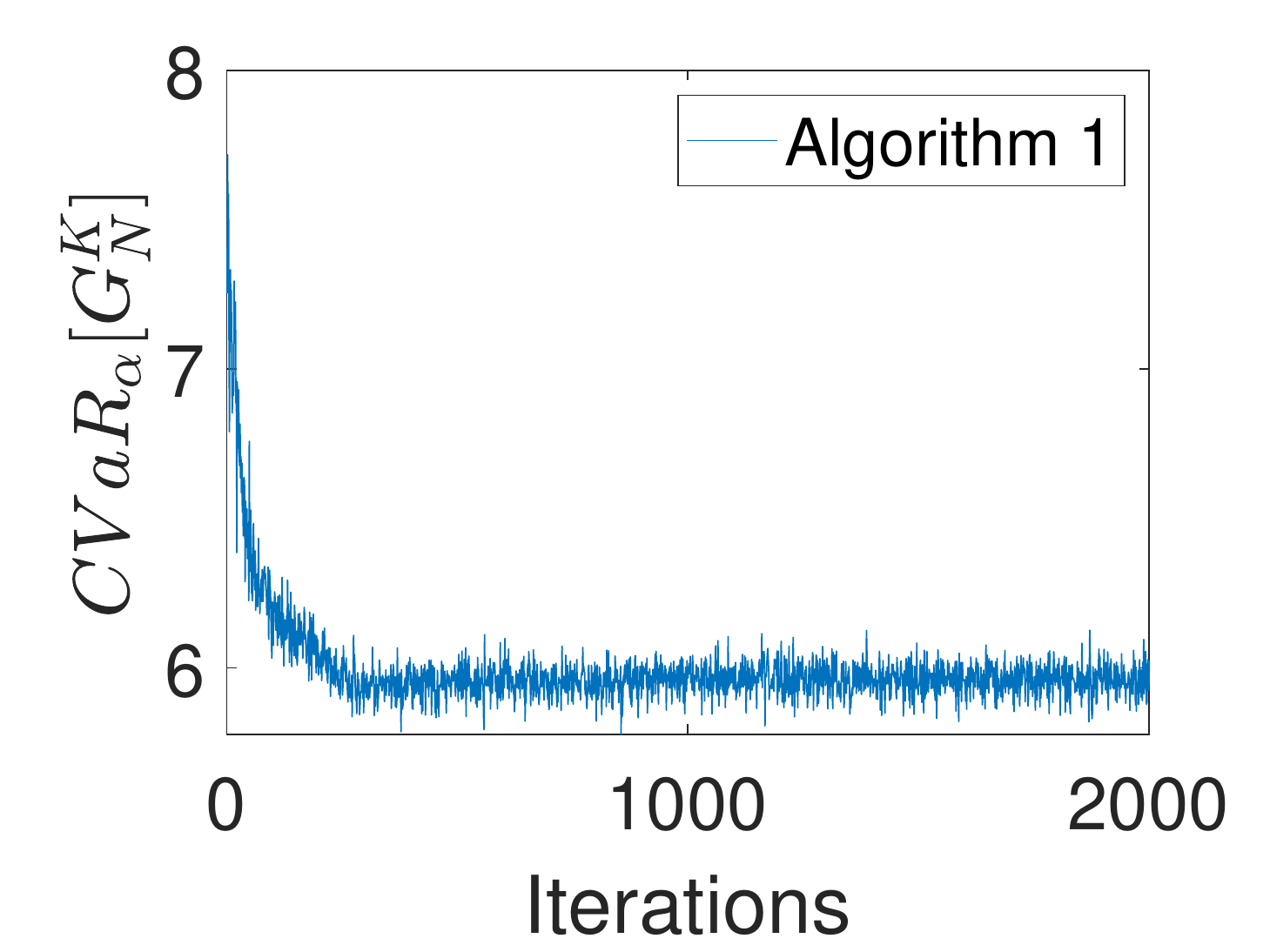}}
	\caption{Risk-averse control  using Algorithm~\ref{alg:algorithm_PG}. The solid lines are averages over 20 runs.} \label{fig:S2}
\end{figure}

\subsection{Numerical Experiments}
Next, we consider a risk-averse LQR  problem and experimentally illustrate the performance of Algorithm~\ref{alg:algorithm_PG}. We illustrate our approach for the same  scalar system with the same cost function as in Section~\ref{Sec:NemVer}. 
The other parameters are selected as $\gamma=0.6$, $\delta=0.1$, $\eta=0.0004$, $N= 10$, respectively. The initial controller is set as $K_0=-0.2$, which is a stable one.

We first set $\alpha=1$: in this case, the risk-averse control problem is reduced to a risk-neutral control problem. Therefore, we can use traditional LQR techniques to compute the optimal feedback gain $K^{*}=-0.4684$. 
We run the proposed risk-averse policy gradient Algorithm~\ref{alg:algorithm_PG} and the simulation results are presented in Fig.~\ref{fig:S2} (a) and (b). Specifically, in Fig.~\ref{fig:S2} (a), the controller $K$ returned by Algorithm 1 converges to $K^{*}$, which verifies our proposed method for the risk-neutral case. Fig.~\ref{fig:S2} (b) illustrates the values of ${\rm{CVaR}}$ achieved by Algorithm~\ref{alg:algorithm_PG}.
Additionally, we select $\alpha=0.4$ to find the optimal risk-averse controller. The simulation results are presented in Fig.~\ref{fig:S2} (c) and (d). We see that $K$  converges to $-0.55$, which leads to a smaller $A+BK$ compared to $K^{*}=-0.4684$.



\section{Conclusions} 
We have proposed a new distributional approach to the classic discounted LQR problem. Specifically, we first provided an analytic expression for the exact random return  that depends on infinitely many random variables. 
Since the computation of this expression is difficult in practice, we also proposed an approximate expression for the distribution of the random return that only depends on a finite number of random variables, and have further characterised the error between these two distributions. Finally, we utilised the proposed random return to  obtain an optimal controller for a risk-averse LQR problem using the CVaR as a measure of risk. 
To the best of our knowledge, this is a first framework for distributional LQR: it inherits the advantages of DRL methods compared to standard RL methods that rely on the expected return to evaluate the effect of a given policy, but it also provides an analytic expression for the return distribution, an area where current DRL methods significantly lack. 
Future research includes analyzing the theoretical convergence of risk-averse policy gradient algorithms and exploring a model-free setup where the system matrices are unknown.

\acks{This work is supported in part by the Knut and Alice Wallenberg Foundation, the Swedish Strategic Research Foundation, the Swedish Research Council,  AFOSR under award \#FA9550-19-1-0169, and  NSF under award CNS-1932011.}


\end{document}